\documentclass[11pt]{article}

\usepackage[a4paper,margin=1in]{geometry}
\usepackage{amsmath,amssymb,amsthm,mathtools}
\usepackage[colorlinks=true,citecolor=blue,linkcolor=blue,urlcolor=blue]{hyperref}

\numberwithin{equation}{section}

\newcommand{\R}{\mathbb{R}}
\newcommand{\Ee}{\mathbb{E}}

\newcommand{\one}{\mathbf 1}

\theoremstyle{plain}
\newtheorem{theorem}{Theorem}[section]

\newtheorem{lemma}[theorem]{Lemma}

\theoremstyle{definition}

\theoremstyle{remark}

\title{Logarithmic Large Deviations for Heavy-Tailed Sums}
\author{Jos\'e Miguel Zapata}
\date{}

\begin{document}

\maketitle

\begin{abstract}
We establish logarithmic large-deviation bounds for sums of independent
nonnegative random variables with regularly varying tails. The normalization is
chosen at the extreme-value scale and the speed is $\log n$. In contrast with
Cram\'er's theorem, the resulting rate function is determined only by the tail
index. The proof transfers a maximum large-deviation principle to sums in the
one-big-jump region.
\end{abstract}

\section{Introduction}

Let $(X_k)_{k\ge1}$ be independent and identically distributed real-valued
random variables, and set $S_n=X_1+\cdots+X_n$. Cram\'er's theorem
\cite{Cramer1938} is the classical large-deviation result for sums of
independent random variables with light tails. If the logarithmic moment
generating function $\Lambda(\lambda)=\log\Ee[e^{\lambda X_1}]$ is finite in a
neighbourhood of the origin, then the empirical mean $S_n/n$ satisfies a
large-deviation principle with speed $n$ and rate function
$$
\Lambda^*(x)=\sup_{\lambda\in\R}\{\lambda x-\Lambda(\lambda)\}.
$$
In particular, for $x>\Ee[X_1]$,
\begin{equation}
\label{eq:cramer}
\lim_{n\to\infty}\frac{1}{n}\log \mathbb{P}(S_n\ge nx)
=
-\Lambda^*(x),
\end{equation}
so upper-tail probabilities are exponentially small in $n$. We refer to
\cite{DemboZeitouni1998} for background on Cram\'er's theorem and
large-deviation theory.

This framework is no longer appropriate for heavy-tailed random variables. If
the upper tail of $X_1$ is regularly varying, then positive exponential moments
are infinite and the Cram\'er transform does not describe rare upper-tail
events. In this case, large deviations occur on a polynomial rather than an
exponential scale. The relevant mechanism is the one-big-jump principle: a
large value of the sum is typically caused by one exceptionally large summand,
rather than by a collective displacement of all summands.

The purpose of this note is to formulate this heavy-tailed polynomial regime as
a Cram\'er-type large-deviation principle. Let $a_n$ be the upper $1-1/n$
quantile of $X_1$, and consider the normalized sums
$$
Y_n=\left(\frac{S_n}{a_n}\right)^{1/\log n}.
$$
This normalization converts polynomial excesses above the extreme-value scale
into fixed levels: indeed, $\{S_n\ge a_n n^x\}$ is equivalent to
$\{Y_n\ge e^x\}$.

We prove large-deviation bounds for $(Y_n)$ in the far right tail, where the
one-big-jump mechanism governs the deviation. The speed is $\log n$, and the
rate function is $I(y)=\alpha\log y$, where $\alpha$ is the tail index. The
statement is set-valued: it gives lower bounds for arbitrary open sets and
upper bounds for sets that are closed in $\R$, both restricted to the
right-tail region $E_\alpha$ introduced below. Thus the result does not only
identify the decay of a single threshold probability, but describes the
logarithmic decay of probabilities $\mathbb P(Y_n\in A)$ for general
right-tail deviation sets $A$.

For the particular upper-tail event $S_n\ge a_n n^x$, the result gives
$$
\lim_{n\to\infty}\frac{1}{\log n}
\log\mathbb{P}(S_n\ge a_n n^x)
=
-\alpha x,
\qquad
x>\left(1-\frac1\alpha\right)^+.
$$
Thus the exponential speed $n$ in Cram\'er's theorem \eqref{eq:cramer} is replaced by the
logarithmic speed $\log n$, the exponential scale is replaced by the natural
polynomial scale of regularly varying tails, and the rate is determined only by
the tail index.

The proof uses the large-deviation estimate for the normalized maximum
established in \cite[Theorem~1]{Zapata2026HeavyTailedExtrema}. 

\medskip
\noindent\textbf{Relation with the literature.}
The one-big-jump principle is classical in the theory of heavy-tailed and
subexponential distributions. In its basic form, for a fixed number of summands
$m$, it states that
\begin{equation}\label{eq:Nagaev}
\mathbb{P}(S_m>t)\sim m\bar F(t),
\qquad t\to\infty.
\end{equation}
Thus the asymptotic parameter is the threshold $t$, while the number of
summands is fixed. More refined Nagaev-type and subexponential results allow
the number of summands to vary and give uniform estimates on suitable
big-jump regions; see, for instance,
\cite{Nagaev1979,DenisovDiekerShneer2008,FossKorshunovZachary2013}.
These results are mainly concerned with sharp or uniform tail equivalents of
the form \eqref{eq:Nagaev}.

The present paper uses this one-big-jump regime for a different purpose:
rather than deriving a sharp equivalent for a prescribed threshold, we organize
the polynomial deviation probabilities into large-deviation bounds for a
normalized sequence. The threshold is chosen as a function of the sample size,
namely $a_n n^x$, and the asymptotic parameter is $n\to\infty$. In this sense,
the result provides a logarithmic large-deviation substitute for Cram\'er's
theorem in a setting where the classical exponential theory cannot be applied.

\section{Setup and main result}\label{sec:setup}

Let $X,X_1,X_2,\ldots$ be independent and identically distributed nonnegative
random variables with distribution function $F$ and survival function $\bar F=1-F.$ 
Throughout the paper we assume that $\bar F$ is regularly varying with index
$-\alpha$, where $\alpha>0$; that is,
$$
\bar F(x)=x^{-\alpha}L(x),
\qquad x>0,
$$
where $L$ is slowly varying:
$$
\lim_{x\to\infty}\frac{L(xy)}{L(x)}=1,
\qquad y>0.
$$
For background on regular variation and the facts used below, we refer to
\cite{Resnick2008}.

Define the upper $1-1/n$
quantile
$$
a_n=F^{\leftarrow}\left(1-\frac1n\right),
\qquad
F^{\leftarrow}(u)=\inf\{x\in\R:F(x)\ge u\}.
$$

Write
$$
S_n=X_1+\cdots+X_n,\qquad Y_n=\left(\frac{S_n}{a_n}\right)^{1/\log n}.
$$
Set
$$
c_\alpha=\left(1-\frac1\alpha\right)^+,
\qquad
d_\alpha=e^{c_\alpha},
\qquad
E_\alpha=(d_\alpha,\infty).
$$

We present the main result of the note.
\begin{theorem}\label{thm:sum}
In the setup above, the following large-deviation bounds hold on $E_\alpha$,
with speed $\log n$ and rate function $I(y)=\alpha\log y$.

\begin{itemize}
\item[(i)]
For every open set $G\subset E_\alpha$,
$$
\liminf_{n\to\infty}
\frac{1}{\log n}\log\mathbb{P}(Y_n\in G)
\ge
-\inf_{y\in G}\alpha\log y.
$$

\item[(ii)]
For every set $F\subset E_\alpha$ that is closed in $\R$,
$$
\limsup_{n\to\infty}
\frac{1}{\log n}\log\mathbb{P}(Y_n\in F)
\le
-\inf_{y\in F}\alpha\log y.
$$
\end{itemize}
Moreover, for every $x>c_\alpha$ one has
$$
\lim_{n\to\infty}
\frac{1}{\log n}
\log\mathbb{P}(S_n\ge a_n n^x)
=
-\alpha x.
$$
\end{theorem}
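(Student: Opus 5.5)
The plan is to reduce everything to the threshold asymptotics
$$
\lim_{n\to\infty}\frac{1}{\log n}\log\mathbb P(S_n\ge a_n n^x)=-\alpha x,\qquad x>c_\alpha, \tag{$\ast$}
$$
and then obtain (i) and (ii) by monotonicity of $I(y)=\alpha\log y$. The link is $\{Y_n\ge e^x\}=\{S_n\ge a_n n^x\}$.

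\textbf{Lower bound in $(\ast)$.} Since the $X_k$ are nonnegative, $S_n\ge M_n=\max_k X_k$. Hence
$$
\mathbb P(S_n\ge a_nn^x)\ge \mathbb P(M_n\ge a_nn^x)=1-(1-\bar F(a_nn^x))^n .
$$
For $x>0$ this is $\ge \tfrac12 n\bar F(a_nn^x)$ for large $n$. Regular variation gives $n\bar F(a_n)\to1$, and Potter bounds give $\bar F(a_nn^x)/\bar F(a_n)=n^{-\alpha x+o(1)}$. Together these yield $\liminf\ge-\alpha x$. This is exactly the maximum LDP of \cite[Theorem~1]{Zapata2026HeavyTailedExtrema}, which I would cite instead of recomputing.

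\textbf{Upper bound in $(\ast)$.} Fix $x>c_\alpha$ and a small $\varepsilon>0$, and truncate at level $b_n=a_nn^{x-\varepsilon}$. Then
$$
\mathbb P(S_n\ge a_nn^x)\le n\bar F(b_n)+\mathbb P\Big(\sum_k X_k\one_{\{X_k\le b_n\}}\ge a_nn^x\Big).
$$
The first term is $n^{-\alpha(x-\varepsilon)+o(1)}$. The second term is the main obstacle, and I would handle it by cases on $\alpha$.

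\emph{Case $\alpha>1$.} The mean $\mu$ is finite, and $n\mu\ll a_nn^x$ because $a_n=n^{1/\alpha+o(1)}$ and $x>1-1/\alpha$. Apply a Fuk--Nagaev type bound, or more simply a Chernoff bound with $\lambda=s/b_n$:
$$
\mathbb P(\text{second term})\le \exp\bigl(-\lambda(a_nn^x-n\mu)\bigr)\,\bigl(1+\lambda\,\Ee[X\one_{\{X\le b_n\}}]\,e^{s}\bigr)^n .
$$
Karamata's theorem controls the truncated moments. The choice gives decay $\exp(-s\,n^{\varepsilon}(1+o(1)))$, which is superpolynomial.

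\emph{Case $\alpha\le1$.} Here $c_\alpha=0$ and $x>0$. Karamata gives
$$
n\,\Ee[X\one_{\{X\le b_n\}}]\lesssim n\,b_n\bar F(b_n)=b_n n^{-\alpha(x-\varepsilon)+o(1)}\ll a_nn^x,
$$
with the usual logarithmic care when $\alpha=1$. The same exponential Chebyshev argument, using $\Ee[e^{\lambda X\one_{\{X\le b_n\}}}]\le 1+\lambda e^{s}\Ee[X\one_{\{X\le b_n\}}]$, again shows the truncated part is negligible on the $\log n$ scale.

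Letting $\varepsilon\downarrow0$ gives $\limsup\le-\alpha x$. The only delicate bookkeeping is making sure the truncated mean is $o(a_nn^x)$ precisely when $x>c_\alpha$; this is where the threshold $c_\alpha$ enters.

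\textbf{Deducing (i) and (ii).} For (i), take $y\in G$ with $\log y=x>c_\alpha$ and an interval $(y,y')\subset G$. Then
$$
\mathbb P(Y_n\in G)\ge \mathbb P(e^x\le Y_n<y'),
$$
where $y<e^{x'}<y'$ for a suitable $x'$ close to $\log y$. This equals $\mathbb P(S_n\ge a_nn^{x'})-\mathbb P(S_n\ge a_nn^{\log y'})$. By $(\ast)$ the second probability is of strictly smaller polynomial order, so $\liminf\ge-\alpha x'$. Letting $x'\downarrow\log y$ and taking the infimum over $y\in G$ gives (i).

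For (ii), $F$ is closed in $\R$ and contained in $(d_\alpha,\infty)$, so $y_0=\inf F>d_\alpha$ belongs to $F$ (if $F$ is empty there is nothing to prove). Then $\mathbb P(Y_n\in F)\le\mathbb P(S_n\ge a_nn^{\log y_0})$, and $(\ast)$ yields the bound $-\alpha\log y_0=-\inf_F I$.
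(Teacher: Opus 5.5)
Your proposal is correct and follows essentially the paper's route. The lower bound comes from $S_n\ge X_{(n)}$ and the maximum estimate; the upper bound splits off $\{X_{(n)}>b_n\}$ and shows the truncated sum is negligible via an exponential inequality; and (i)--(ii) then follow from the threshold asymptotics and the monotonicity of $I$. The differences are only technical: you truncate polynomially below the threshold at $a_nn^{x-\varepsilon}$, which lets a first-moment Chernoff bound give superpolynomial decay but forces a case split on $\alpha$ (finite mean versus Karamata), whereas the paper truncates at $t_n(x)/(2h\log n)$ and uses Bernstein's inequality with the fractional-moment bound $T_k^r\le b_n^{r-p}X_k^p$, $p<\alpha\wedge1$, which treats all $\alpha$ at once.
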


\section{Proofs}\label{sec:proofs}

Suppose that $X_1,X_2,\ldots$ are as in Section~\ref{sec:setup}. Throughout
this section we use the notation
$$
S_n=X_1+\cdots+X_n,
\qquad
X_{(n)}=\max\{X_1,\ldots,X_n\}.
$$
The corresponding normalized sum and maximum are
$$
Y_n=\left(\frac{S_n}{a_n}\right)^{1/\log n},
\qquad
Z_n=\left(\frac{X_{(n)}}{a_n}\right)^{1/\log n}.
$$
For $x>0$, set
$$
t_n(x)=a_n x^{\log n}.
$$
We also write
$$
c_\alpha=\left(1-\frac1\alpha\right)^+,
\qquad
d_\alpha=e^{c_\alpha},
\qquad
E_\alpha=(d_\alpha,\infty).
$$

The proof uses the following sharp large-deviation estimate for the normalized
maximum, established in \cite[Theorem~1]{Zapata2026HeavyTailedExtrema}. The version proved there assumes a weak von Mises condition and gives a
stronger Borel-set statement, namely an exact logarithmic limit. In the present
paper, we only need the upper-tail estimate below. For completeness, we include
a short proof under the present assumptions in the Appendix.
\begin{lemma}\label{lem:max}
In the present setup, for every $x>1$ one has
$$
\lim_{n\to\infty}
\frac{1}{\log n}\log\mathbb{P}(Z_n>x)=-\alpha\log x.
$$
\end{lemma}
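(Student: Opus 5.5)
The plan is a direct computation. For $x>1$ we have $\{Z_n>x\}=\{X_{(n)}>t_n(x)\}$ with $t_n(x)=a_n x^{\log n}=a_n n^{\log x}$. By independence,
$$
\mathbb P(Z_n>x)=1-\bigl(1-\bar F(t_n(x))\bigr)^n .
$$
We then use the elementary bounds $n p-\binom n2 p^2\le 1-(1-p)^n\le np$ with $p=p_n=\bar F(t_n(x))$. If we can show $np_n\to0$, this sandwiches the probability as $\mathbb P(Z_n>x)\sim n\bar F(t_n(x))$. The problem then reduces to identifying the logarithmic order of $n\bar F(a_n n^{\log x})$.

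Next we record the two regular-variation facts about $a_n$ that we need. Since $\bar F$ is regularly varying with index $-\alpha<0$, $F$ is continuous-enough asymptotically that $n\bar F(a_n)\to1$ (standard, e.g.\ \cite{Resnick2008}). Moreover, $a_n$ is regularly varying in $n$ with index $1/\alpha$, so $\log a_n/\log n\to1/\alpha$ and in particular $a_n\to\infty$. Writing $u=\log x>0$ and $t_n=a_n n^{u}$, we get
$$
n\bar F(t_n)=n\bar F(a_n)\cdot\frac{\bar F(a_n n^u)}{\bar F(a_n)}
= n\bar F(a_n)\, n^{-\alpha u}\,\frac{L(a_n n^u)}{L(a_n)} .
$$

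The main obstacle is controlling the slowly varying ratio $L(a_nn^u)/L(a_n)$. The scaling factor $n^u$ is not fixed, so the defining limit of slow variation does not apply directly. Instead we use the Potter-type consequence $\log L(y)/\log y\to0$ as $y\to\infty$ (from the representation theorem, or Karamata). This gives
$$
\log L(a_n n^u)=o(\log a_n+u\log n)=o(\log n)
\quad\text{and}\quad
\log L(a_n)=o(\log n),
$$
using $\log a_n=O(\log n)$. Hence
$$
\log\bigl(n\bar F(t_n)\bigr)=-\alpha u\log n+o(\log n).
$$
This shows $np_n\to0$, which validates the sandwich. The quadratic correction $\binom n2p_n^2\le (np_n)^2/2$ is negligible relative to $np_n$, so $\log\mathbb P(Z_n>x)=\log(np_n)+o(1)$.

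Dividing by $\log n$ and letting $n\to\infty$ then gives $-\alpha\log x$, as claimed. Only the facts $n\bar F(a_n)\to1$, $\log a_n\sim\alpha^{-1}\log n$, and $\log L(y)=o(\log y)$ are used, all of which follow from regular variation alone, without any von Mises assumption.
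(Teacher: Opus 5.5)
Your proof is correct and follows essentially the same route as the paper: both use the elementary sandwich for $1-(1-q_n)^n$ once $nq_n\to0$ to get $\mathbb P(Z_n>x)\sim n\bar F(t_n(x))$. Both then read off the logarithmic order from $\log L(y)=o(\log y)$ (the paper's $\log\bar F(t)/\log t\to-\alpha$) together with $\log a_n/\log n\to1/\alpha$. The only difference is that you factor through the standard fact $n\bar F(a_n)\to1$, whereas the paper needs only the logarithmic version and proves it directly from the generalized-inverse inequalities.
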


Next, we prove some auxiliary lemmas.

\begin{lemma}
\label{lem:truncation}
Fix $x\in E_\alpha$ and $h>0$. Set
$$
b_n(x,h)=\frac{t_n(x)}{2h\log n}.
$$
Then
$$
\mathbb{P}\left(
\sum_{k=1}^n X_k\one_{\{X_k\le b_n(x,h)\}}
\ge t_n(x)
\right)
\le n^{-h}
$$
for all sufficiently large $n$.
\end{lemma}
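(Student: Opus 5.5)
The plan is an exponential Chebyshev (Chernoff) bound for the truncated sum, with the tilt parameter chosen as the reciprocal of the truncation level. Write $t=t_n(x)$, $b=b_n(x,h)$ and $\lambda=1/b=2h\log n/t$. By Markov's inequality and independence,
$$
\mathbb{P}\Bigl(\sum_{k=1}^n X_k\one_{\{X_k\le b\}}\ge t\Bigr)
\le e^{-\lambda t}\bigl(\Ee[e^{\lambda X\one_{\{X\le b\}}}]\bigr)^n
= n^{-2h}\bigl(\Ee[e^{\lambda X\one_{\{X\le b\}}}]\bigr)^n .
$$
Since $0\le \lambda X\one_{\{X\le b\}}\le 1$, convexity gives $e^u\le 1+(e-1)u$ on $[0,1]$. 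Combined with $\log(1+v)\le v$, this yields
$$
n\log \Ee[e^{\lambda X\one_{\{X\le b\}}}]\le (e-1)\,\frac{n\,\Ee[X\one_{\{X\le b\}}]}{b}.
$$
It therefore suffices to show that $\rho_n:=n\,\Ee[X\one_{\{X\le b\}}]/b\to0$. Then the bound is at most $n^{-2h}e^{o(1)}\le n^{-h}$ for large $n$.

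To control $\rho_n$, I would first record the growth of the scales. The sequence $a_n$ is regularly varying with index $1/\alpha$, so $a_n=n^{1/\alpha+o(1)}$. Hence
$$
b=\frac{a_n x^{\log n}}{2h\log n}=n^{1/\alpha+\log x+o(1)} .
$$
Because $x\in E_\alpha$, we have $\log x>c_\alpha\ge0$. I would then split into cases according to $\alpha$.

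\emph{Case $\alpha>1$.} Here $\Ee X<\infty$, so $\rho_n\le n\,\Ee X/b=n^{1-1/\alpha-\log x+o(1)}\to0$, because $\log x>1-1/\alpha$.

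\emph{Case $\alpha<1$.} Karamata's theorem gives $\Ee[X\one_{\{X\le b\}}]\sim \frac{\alpha}{1-\alpha}\,b\bar F(b)$. Hence $\rho_n\asymp n\bar F(b)=n\cdot n^{-1-\alpha\log x+o(1)}\to0$, since $\log x>0$.

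\emph{Case $\alpha=1$.} The truncated mean $\Ee[X\one_{\{X\le b\}}]$ is slowly varying in $b$, so it is at most $b^{\varepsilon}$ for large $b$. Then $\rho_n\le n\,b^{\varepsilon-1}=n^{1-(1+\log x)(1-\varepsilon)+o(1)}$, which tends to $0$ once $\varepsilon$ is chosen small relative to $\log x>0$.

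The main point needing care is the $o(1)$ bookkeeping in the exponents. The slowly varying factors in $a_n$, $\bar F(b)$ and the truncated mean, together with the $\log n$ factor in $b$, must all be absorbed into $n^{o(1)}$. I would handle this with Potter bounds. Everything else is routine, and the strict inequality $\log x>c_\alpha$ supplies exactly the polynomial margin required in each case.
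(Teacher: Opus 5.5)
Your proof is correct, but it takes a different route from the paper.

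The paper centers the truncated summands $T_k=X_k\one_{\{X_k\le b_n\}}$ and applies the one-sided Bernstein inequality. It therefore has to control both $n\Ee[T_1]/t_n$ and $n\Ee[T_1^2]/(b_nt_n)$. It gets both at once from the single inequality $T_k^r\le b_n^{r-p}X_k^p$ for $r=1,2$, using a fractional moment with $q^{-1}<p<\alpha\wedge1$ and $q=\log x+1/\alpha$. The choice $b_n=t_n/(2h\log n)$ is tuned so that Bernstein's exponent comes out as exactly $t_n/(2b_n)=h\log n$.

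Your Chernoff bound with tilt $\lambda=1/b$ is more self-contained. Since $\lambda T_k\in[0,1]$, the chord bound $e^u\le 1+(e-1)u$ means only the first truncated moment matters, and no variance term appears. You also get the stronger estimate $n^{-2h+o(1)}$; in fact $\rho_n=o(\log n)$ would already suffice.

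The cost of your route is the three-case analysis via Karamata's theorem, and it can be avoided. With the paper's device, $\Ee[X\one_{\{X\le b\}}]\le b^{1-p}\Ee[X^p]$ for the same $p$. This gives $\rho_n\le \Ee[X^p]\,n b^{-p}=n^{1-pq+o(1)}\to0$ for every $\alpha$ at once, because $b=n^{q+o(1)}$. The condition $pq>1$ is exactly what $x\in E_\alpha$ guarantees.
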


\begin{proof}
Write $t_n=t_n(x)$ and $b_n=b_n(x,h)$, and put
$q=\log x+1/\alpha$. Since $x>e^{c_\alpha}$, we have
$q>1/(\alpha\wedge1)$. Choose $p$ such that
$q^{-1}<p<\alpha\wedge1$. Then $p<1$, $p<\alpha$, and $pq>1$.

Let $T_k=X_k\one_{\{X_k\le b_n\}}$. Since $0\le T_k\le b_n$ and $p<1$, for
$r=1,2$,
$$
T_k^r\le b_n^{r-p}X_k^p.
$$
Thus, taking expectations,
$$
\Ee[T_k^r]\le b_n^{r-p}\Ee[X_k^p].
$$
By Lemma~\ref{lem:moment}, $\Ee[X_1^p]<\infty$, and by
Lemma~\ref{lem:marginal}(ii), $t_n=n^{q+o(1)}$. Hence, for $r=1,2$,
\begin{align}
\frac{n\Ee[T_1^r]}{b_n^{r-1}t_n}
&\le
\Ee[X_1^p]\frac{n b_n^{1-p}}{t_n}  \nonumber\\
&=
\Ee[X_1^p]\frac{n t_n^{-p}}{(2h\log n)^{1-p}} \nonumber\\
&=
\Ee[X_1^p]\frac{n^{1-pq+o(1)}}{(2h\log n)^{1-p}}.
\end{align}
Since $pq>1$ and $p<1$, the last term converges to $0$. Consequently, for all
sufficiently large $n$,
$$
n\Ee[T_1]\le \frac{t_n}{2},
\qquad
8n\Ee[T_1^2]\le \frac{2}{3}b_n t_n.
$$

Then, for $n$ large enough,
$$
\left\{\sum_{k=1}^n T_k\ge t_n\right\}
\subset
\left\{\sum_{k=1}^n\bigl(T_k-\Ee[T_1]\bigr)\ge \frac{t_n}{2}\right\}.
$$
By the one-sided Bernstein inequality for bounded independent random variables;
see \cite[Proposition~2.14]{wainwright2019highdimensional},
$$
\mathbb{P}\left(\sum_{k=1}^n T_k\ge t_n\right)
\le
\exp\left(
-\frac{t_n^2}{8n\Ee[T_1^2]+(4/3)b_n t_n}
\right).
$$
The denominator is at most $2b_n t_n$, and therefore
$$
\mathbb{P}\left(\sum_{k=1}^n T_k\ge t_n\right)
\le
\exp\left(-\frac{t_n}{2b_n}\right)
=
n^{-h}.
$$
The proof is complete.
\end{proof}

\begin{lemma}\label{lem:sum-tail}
For every $x\in E_\alpha$,
$$
\lim_{n\to\infty}
\frac{1}{\log n}\log\mathbb P(Y_n>x)
=
-\alpha\log x.
$$
\end{lemma}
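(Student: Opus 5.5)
The idea is to compare $Y_n$ with the normalized maximum $Z_n$. The lower bound comes directly from Lemma~\ref{lem:max}. The upper bound comes from a one-big-jump decomposition: either the maximum is large, or the truncated sum of Lemma~\ref{lem:truncation} is large, or several summands are moderately large. Throughout we fix $x\in E_\alpha$ and note that $x>d_\alpha\ge1$. Since $\{Y_n>x\}=\{S_n>t_n(x)\}$, we work with $t_n=t_n(x)$.

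\emph{Lower bound.} Since $X_k\ge0$, we have $S_n\ge X_{(n)}$, hence $Y_n\ge Z_n$ and $\mathbb P(Y_n>x)\ge\mathbb P(Z_n>x)$. Lemma~\ref{lem:max} applies because $x>1$, and it gives
$$
\liminf_{n\to\infty}\frac{1}{\log n}\log\mathbb P(Y_n>x)\ge-\alpha\log x .
$$

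\emph{Upper bound.} Fix $\varepsilon\in(0,1)$ with $x^{1-\varepsilon}>1$, and fix $h>\alpha\log x$. Set $b_n=b_n(x,h)$ as in Lemma~\ref{lem:truncation}. On $\{S_n>t_n\}$ one of the following holds:

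(a) $X_{(n)}>(1-\varepsilon)t_n$;

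(b) at least two indices satisfy $X_k>b_n$;

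(c) at most one index satisfies $X_k>b_n$, that summand is $\le(1-\varepsilon)t_n$, and therefore $\sum_k X_k\one_{\{X_k\le b_n\}}>\varepsilon t_n$.

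For (a), note that $(1-\varepsilon)t_n=a_n\bigl(x(1-\varepsilon)^{1/\log n}\bigr)^{\log n}$ and $(1-\varepsilon)^{1/\log n}\to1$. Hence, for any $x'<x$ with $x'>1$, eventually $\mathbb P(\text{a})\le\mathbb P(Z_n>x')$. Lemma~\ref{lem:max} then gives the rate $-\alpha\log x'$, and letting $x'\uparrow x$ at the end recovers $-\alpha\log x$.

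For (c), we rerun the proof of Lemma~\ref{lem:truncation} with threshold $\varepsilon t_n$ in place of $t_n$, keeping the same truncation level $b_n$. The factor $\varepsilon$ only changes constants: the moment ratios still tend to $0$, and the Bernstein exponent becomes $\varepsilon t_n/(2b_n)=\varepsilon h\log n$. This gives the bound $n^{-\varepsilon h}$, which is negligible once $h$ is chosen with $\varepsilon h>\alpha\log x$.

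For (b), the union bound gives $\mathbb P(\text{b})\le n^2\bar F(b_n)^2$. Write $t_n=n^{q+o(1)}$ with $q=\log x+1/\alpha$, using Lemma~\ref{lem:marginal}(ii). The factor $(2h\log n)^{-1}$ is only $n^{o(1)}$, so $b_n=n^{q+o(1)}$ as well. Potter bounds then give $\bar F(b_n)=n^{-\alpha q+o(1)}=n^{-1-\alpha\log x+o(1)}$, and hence
$$
n^2\bar F(b_n)^2=n^{-2\alpha\log x+o(1)},
$$
whose exponent is strictly below $-\alpha\log x$ because $\log x>0$.

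Combining (a)--(c) and using $\log(u+v+w)\le\log 3+\max(\log u,\log v,\log w)$, we get
$$
\limsup_{n\to\infty}\frac{1}{\log n}\log\mathbb P(Y_n>x)\le-\alpha\log x' .
$$
Letting $x'\uparrow x$ completes the upper bound.

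The main obstacle is step (b): we must check that the truncation level $b_n$ is still of polynomial order $n^{q+o(1)}$, so that the regular-variation estimates apply. This is also the only point where $\log x>0$ (i.e.\ $x>1$) is used to beat the single-jump rate. The choice of $h$ is harmless, since Lemma~\ref{lem:truncation} allows any $h$.
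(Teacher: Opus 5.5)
Your proof is correct, but its upper bound uses a finer decomposition than the paper's.

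\textbf{The paper's route.} The paper splits $\{S_n>t_n(x)\}$ into only two events: $\{X_{(n)}>b_n(x,h)\}$ and $\{\sum_k X_k\one_{\{X_k\le b_n(x,h)\}}>t_n(x)\}$. It handles the first directly by Lemma~\ref{lem:max}. Since $b_n(x,h)=t_n(xe^{-\eta})\,n^{\eta}/(2h\log n)\ge t_n(xe^{-\eta})$ eventually, a single summand of size merely $b_n$ already costs $n^{-\alpha\log x+\alpha\eta+o(1)}$. At speed $\log n$ the factor $2h\log n$ is invisible. So the paper needs no two-jump estimate, and it applies Lemma~\ref{lem:truncation} verbatim at the full threshold $t_n(x)$.

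\textbf{Your route.} Your event (b) is contained in $\{X_{(n)}>b_n\}$. For the logarithmic statement, (b) and the rerun of the truncation lemma at level $\varepsilon t_n$ are therefore redundant. What your classical three-way split buys is sharpness. In (a), bound $\mathbb P(X_{(n)}>(1-\varepsilon)t_n)\le n\bar F((1-\varepsilon)t_n)\sim(1-\varepsilon)^{-\alpha}n\bar F(t_n)$ instead of invoking Lemma~\ref{lem:max}. Your estimates for (b) and (c) are $o(n\bar F(t_n))$ once $\varepsilon h>\alpha\log x$. Combined with $\mathbb P(S_n>t_n)\ge\mathbb P(X_{(n)}>t_n)\sim n\bar F(t_n)$, letting $\varepsilon\downarrow0$ gives the Nagaev-type equivalent $\mathbb P(S_n>t_n(x))\sim n\bar F(t_n(x))$. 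That is strictly stronger than the lemma.

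\textbf{Bookkeeping.} You must choose $h$ after $\varepsilon$, with $\varepsilon h>\alpha\log x$; your initial requirement $h>\alpha\log x$ is not enough for (c). The condition $x^{1-\varepsilon}>1$ holds automatically since $x>1$, and it is never used.
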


\begin{proof}
Since $Y_n\ge Z_n$, applying  Lemma \ref{lem:max}  we get the lower bound 
$$
\liminf_{n\to\infty}
\frac{1}{\log n}\log\mathbb P(Y_n>x)\ge 
\liminf_{n\to\infty}
\frac{1}{\log n}\log\mathbb P(Z_n>x)
=
-\alpha\log x.
$$

We now prove the upper bound. Fix $x\in E_\alpha$ and $h>0$. We have
\begin{align*}
\{Y_n>x\}
&=
\{S_n>t_n(x)\}\\
&\subset
\{X_{(n)}>b_n(x,h)\}
\cup
\left\{
\sum_{k=1}^n X_k\one_{\{X_k\le b_n(x,h)\}}>t_n(x)
\right\}.
\end{align*}
By Lemma~\ref{lem:truncation}, the second event has probability at most
$n^{-h}$ for all sufficiently large $n$.

As for the first event, choose $\eta>0$ so small that
$xe^{-\eta}> 1$. Note that 
$$
b_n(x,h)
=
\frac{t_n(x)}{2h\log n}
=
\frac{a_n x^{\log n}}{2h\log n}
=
\frac{t_n(xe^{-\eta}) n^\eta}{2h\log n}.
$$
Since $n^\eta/\log n\to\infty$, we have $b_n(x,h)\ge t_n(xe^{-\eta})$ for
all sufficiently large $n$. Hence
$$
\{X_{(n)}>b_n(x,h)\}
\subset
\{X_{(n)}>t_n(xe^{-\eta})\}
=
\{Z_n>xe^{-\eta}\}.
$$
By Lemma~\ref{lem:max}, applied to $xe^{-\eta}>1$,
$$
\limsup_{n\to\infty}
\frac{1}{\log n}
\log\mathbb P(X_{(n)}>b_n(x,h))
\le
-\alpha\log(xe^{-\eta})
=
-\alpha\log x+\alpha\eta.
$$
Combining the bounds obtained above, we obtain
\begin{align*}
\limsup_{n\to\infty}
\frac{1}{\log n}\log\mathbb P(Y_n>x)
&\le
\bigl(-\alpha\log x+\alpha\eta\bigr)\vee(-h).
\end{align*}
Letting first $h\to\infty$ and then $\eta\downarrow0$ gives
$$
\limsup_{n\to\infty}
\frac{1}{\log n}\log\mathbb P(Y_n>x)
\le
-\alpha\log x.
$$
This completes the proof.
\end{proof}

We now turn to the proof of the main result.

\begin{proof}[Proof of Theorem~\ref{thm:sum}]

Let $G\subset E_\alpha$ be open. 

Given a fixed $x_0\in G$, we choose $a,b\in E_\alpha$
such that $a<x_0<b$ and $(a,b]\subset G$. 

The function $x\mapsto I(x)=\alpha\log x$  is strictly increasing and $a<b$, hence  $I(a)<I(b)$. Fix
$\varepsilon>0$ such that
$$
2\varepsilon<I(b)-I(a).
$$
By Lemma~\ref{lem:sum-tail}, for all sufficiently large $n$,
$$
n^{-I(a)-\varepsilon}\le \mathbb P(Y_n>a),\qquad  
\mathbb P(Y_n>b)\le n^{-I(b)+\varepsilon}.
$$
Therefore
\begin{align*}
\mathbb P(a<Y_n\le b)
&=
\mathbb P(Y_n>a)-\mathbb P(Y_n>b) \\
&\ge
n^{-I(a)-\varepsilon}-n^{-I(b)+\varepsilon} \\
&=
n^{-I(a)-\varepsilon}
\left(1-n^{I(a)-I(b)+2\varepsilon}\right).
\end{align*}
Since $I(a)-I(b)+2\varepsilon<0$, the term in parentheses converges to $1$. 
Consequently,
$$
\liminf_{n\to\infty}
\frac{1}{\log n}\log\mathbb P(a<Y_n\le b)
\ge
-I(a)-\varepsilon.
$$
Letting first $\varepsilon\downarrow0$ and then  $a\uparrow x_0$ give 
$$
\liminf_{n\to\infty}
\frac{1}{\log n}\log\mathbb P(Y_n\in G)
\ge
-I(x_0).
$$
Since $x_0\in G$ was arbitrary,
$$
\liminf_{n\to\infty}
\frac{1}{\log n}\log\mathbb P(Y_n\in G)
\ge
-\inf_{x\in G}I(x).
$$

Now let $F\subset E_\alpha$ be closed in $\mathbb R$. If $F=\emptyset$, there
is nothing to prove. Since $F$ is closed in $\mathbb R$ and
$F\subset E_\alpha=(d_\alpha,\infty)$, we have
$$
a_F:=\inf F\in F
\qquad\text{and}\qquad
a_F>d_\alpha.
$$
For every $\delta\in(0,a_F-d_\alpha)$,
$$
F\subset(a_F-\delta,\infty).
$$
Hence
$$
\mathbb P(Y_n\in F)
\le
\mathbb P(Y_n>a_F-\delta).
$$
By Lemma~\ref{lem:sum-tail},
$$
\limsup_{n\to\infty}
\frac{1}{\log n}\log\mathbb P(Y_n\in F)
\le
-I(a_F-\delta).
$$
Letting $\delta\downarrow0$ and using the continuity of $I$, we obtain
$$
\limsup_{n\to\infty}
\frac{1}{\log n}\log\mathbb P(Y_n\in F)
\le
-I(a_F)=-\inf_{x\in F}I(x),
$$
where we have applied in the last equality that  $I$ is increasing. This completes the proof of the upper bound.  

Finally, we note that  the final assertion of Theorem \ref{thm:sum} follows from  Lemma~\ref{lem:sum-tail} above.  
The proof is complete.
\end{proof}

\appendix

\section{Auxiliary results}\label{sec:auxiliary}

This appendix is devoted to some auxiliary results.

Throughout,  we assume that
$$
\bar F(x)=x^{-\alpha}L(x),
\qquad
\alpha>0,
$$
where $L$ is slowly varying. We use standard facts on regularly varying
functions, including Potter bounds, quantile asymptotics, and polynomial tail
bounds; see \cite[Section~0.6]{Resnick2008}.
Recall that
$$
a_n=F^{\leftarrow}\left(1-\frac{1}{n}\right),
\qquad
t_n(x)=a_n x^{\log n}.
$$

\begin{lemma}\label{lem:moment}
If $\bar F(x)=x^{-\alpha}L(x)$ with $\alpha>0$, then
$$
\Ee[X^p]<\infty
$$
for every $p\in(0,\alpha)$.
\end{lemma}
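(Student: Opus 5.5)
The plan is to use the layer-cake formula for nonnegative random variables together with a polynomial bound on $\bar F$. I would write
$$
\Ee[X^p]=\int_0^\infty p\,x^{p-1}\,\mathbb P(X>x)\,dx
=\int_0^\infty p\,x^{p-1}\bar F(x)\,dx,
$$
and then split the integral at a large threshold $x_0$. On $[0,x_0]$ the integrand is bounded by $p\,x^{p-1}$, which is integrable near $0$ because $p>0$. So everything reduces to controlling the tail piece $\int_{x_0}^\infty p\,x^{p-1}\bar F(x)\,dx$.

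For the tail piece, fix $\varepsilon>0$ with $p+\varepsilon<\alpha$, for instance $\varepsilon=(\alpha-p)/2$. The key input is the standard fact that a slowly varying function satisfies $x^{-\varepsilon}L(x)\to0$ as $x\to\infty$. This follows from the Potter bounds or from the representation theorem in \cite[Section~0.6]{Resnick2008}. Consequently there is $x_0$ with $L(x)\le x^{\varepsilon}$ for $x\ge x_0$, hence $\bar F(x)\le x^{-\alpha+\varepsilon}$ there. The tail integral is then dominated by
$$
p\int_{x_0}^\infty x^{p-1-\alpha+\varepsilon}\,dx,
$$
which converges because $p-\alpha+\varepsilon<0$.

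The only step with genuine content is the bound $L(x)=O(x^{\varepsilon})$. If one prefers not to cite the representation theorem, I would derive it directly from the definition of slow variation. Choose $x_1$ such that $L(2y)/L(y)\le 2^{\varepsilon}$ for all $y\ge x_1$; this uses local uniform convergence of $L(\lambda y)/L(y)$, or simply the pointwise limit at $\lambda=2$. Iterating along the dyadic points $2^k x_1$ gives $L(2^k x_1)\le C\,2^{k\varepsilon}$. To pass from the dyadic points to all $x\ge x_1$, I would avoid the interpolation issue altogether by bounding the integral dyadically. Since $\bar F$ is monotone, on each block $[2^k x_1,2^{k+1}x_1]$ one has $\bar F(x)\le \bar F(2^k x_1)$, which is at most $C' (2^k x_1)^{-\alpha+\varepsilon}$. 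Each block therefore contributes at most a constant times $2^{k(p-\alpha+\varepsilon)}$, and this is a convergent geometric series. This monotonicity trick is the main point to handle carefully; everything else is routine.
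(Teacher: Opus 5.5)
Your proposal is correct and follows the same route as the paper: the layer-cake formula $\Ee[X^p]=p\int_0^\infty t^{p-1}\bar F(t)\,\mathrm dt$, combined with the eventual bound $\bar F(t)\le t^{-\alpha+\varepsilon}$ for some $\varepsilon$ with $p+\varepsilon<\alpha$. The paper leaves the piece on $[0,x_0]$ implicit and simply cites regular variation for the polynomial bound; your dyadic argument, which uses only $L(2y)/L(y)\to1$ and the monotonicity of $\bar F$, is a valid self-contained replacement for that citation.
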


\begin{proof}
For a nonnegative random variable $X$,
$$
\Ee[X^p]
=
p\int_0^\infty t^{p-1}\mathbb P(X>t)\,\mathrm dt.
$$
Choose $\varepsilon>0$ such that $p+\varepsilon<\alpha$. By regular variation,
there exists $t_0>0$ such that
$$
\bar F(t)\le t^{-\alpha+\varepsilon},
\qquad t\ge t_0.
$$
Therefore
$$
\int_{t_0}^\infty t^{p-1}\bar F(t)\,\mathrm dt
\le
\int_{t_0}^\infty t^{p-1-\alpha+\varepsilon}\,\mathrm dt
<\infty.
$$
This proves the claim.
\end{proof}

\begin{lemma}\label{lem:marginal}
The following estimates hold.

\begin{itemize}
\item[(i)]
The high quantile satisfies
$$
\frac{\log a_n}{\log n}\longrightarrow \frac{1}{\alpha}.
$$
Moreover,
$$
\frac{\log\bar F(a_n)}{\log n}\longrightarrow -1.
$$

\item[(ii)]
For every $M>1$,
$$
\frac{\log t_n(x)}{\log n}
\longrightarrow
\frac{1}{\alpha}+\log x
$$
uniformly for $x\in[1,M]$.

\item[(iii)]
For every $M>1$,
$$
\bar F(t_n(x))
=
n^{-1-\alpha\log x+o(1)}
$$
uniformly for $x\in[1,M]$.

\item[(iv)]
If $(b_n)$ is a positive sequence such that $\log b_n=o(\log n)$, then, for
every $M>1$,
$$
\bar F(b_n t_n(x))
=
n^{-1-\alpha\log x+o(1)}
$$
uniformly for $x\in[1,M]$.
\end{itemize}
\end{lemma}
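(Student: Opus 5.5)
The plan is to derive all four items from the representation $\bar F(x)=x^{-\alpha}L(x)$, using two standard facts about a slowly varying $L$. First, $\log L(x)/\log x\to0$ as $x\to\infty$. Second, the Potter-type consequence: for every $\varepsilon>0$ there is $x_0$ such that
$$x^{-\alpha-\varepsilon}\le\bar F(x)\le x^{-\alpha+\varepsilon}\qquad\text{for } x\ge x_0.$$
Both facts are recorded in \cite[Section~0.6]{Resnick2008}. Everything then reduces to bookkeeping of exponents on the logarithmic scale.

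For (i), we first note that $a_n\to\infty$: $\bar F>0$ everywhere because it is regularly varying, so no finite point has $F\ge 1-1/n$ for all $n$. By right-continuity of $F$ and the definition of the quantile, $\bar F(a_n)\le 1/n$. For every $y<a_n$ we have $F(y)<1-1/n$, so $\bar F(a_n-)\ge1/n$.

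Since $\bar F(x)=x^{-\alpha+o(1)}$, and the left limit obeys the same bound, we get $a_n^{-\alpha+o(1)}\le 1/n\le a_n^{-\alpha+o(1)}$. Taking logarithms gives $\log n=(\alpha+o(1))\log a_n$, that is, $\log a_n/\log n\to1/\alpha$.

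For the second claim of (i), we combine $\bar F(a_n)\le 1/n$ with the lower Potter bound $\bar F(a_n)\ge a_n^{-\alpha-\varepsilon}=n^{-1-\varepsilon/\alpha+o(1)}$. Letting $\varepsilon\downarrow0$ yields $\log\bar F(a_n)/\log n\to-1$. The only delicate point in this step is the possible atom at $a_n$, and using the left limit handles it.

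Item (ii) is immediate. We have $\log t_n(x)=\log a_n+\log n\,\log x$, so
$$\frac{\log t_n(x)}{\log n}-\frac1\alpha-\log x=\frac{\log a_n}{\log n}-\frac1\alpha.$$
The right-hand side does not depend on $x$, which gives the uniformity for free, indeed on all of $[1,\infty)$.

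For (iii), note that $t_n(x)\ge a_n\to\infty$ uniformly for $x\ge1$. Applying the two-sided Potter bounds with exponent $\varepsilon$ gives
$$\log\bar F(t_n(x))=(-\alpha+\theta)\log t_n(x),\qquad|\theta|\le\varepsilon,$$
for all $n\ge n_0$ uniformly in $x\in[1,M]$. By (ii), $\log t_n(x)=(\tfrac1\alpha+\log x+o(1))\log n$ uniformly, and $\tfrac1\alpha+\log x\le\tfrac1\alpha+\log M$ is bounded. Multiplying out gives
$$\log\bar F(t_n(x))=\bigl(-1-\alpha\log x+O(\varepsilon)+o(1)\bigr)\log n$$
uniformly. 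Since $\varepsilon$ is arbitrary, this proves (iii).

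Item (iv) repeats the argument with $t_n(x)$ replaced by $b_nt_n(x)$. We have $\log(b_nt_n(x))=\log b_n+\log t_n(x)$ with $\log b_n=o(\log n)$. Hence (ii) holds verbatim for $b_nt_n(x)$, and in particular $b_nt_n(x)\to\infty$ uniformly. The Potter argument from (iii) then applies unchanged.

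The main obstacle is only mild: it is making the $o(1)$ terms uniform in $x$. This works because the error in (ii) is independent of $x$, and because in (iii)–(iv) the Potter exponent error $\varepsilon$ multiplies the bounded quantity $\log t_n(x)/\log n\le \tfrac1\alpha+\log M+1$ once $n$ is large.
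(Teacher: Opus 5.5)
Your proof is correct and follows essentially the same route as the paper. You sandwich $1/n$ between $\bar F(a_n)$ and a value just to the left of $a_n$ using the polynomial bounds $t^{-\alpha-\varepsilon}\le\bar F(t)\le t^{-\alpha+\varepsilon}$, observe that the error in (ii) does not depend on $x$, and apply these bounds uniformly along $t_n(x)\ge a_n\to\infty$ (respectively $b_nt_n(x)\to\infty$) for (iii)--(iv); the only cosmetic difference is that you use the left limit $\bar F(a_n-)\ge 1/n$ where the paper uses $\bar F(a_n-\rho)\ge 1/n$ for a fixed $\rho>0$, and these are equivalent.
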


\begin{proof}
By regular variation,
$$
\frac{\log \bar F(t)}{\log t}\longrightarrow -\alpha,
\qquad t\to\infty;
$$
see \cite[Proposition~0.8(ii)]{Resnick2008}. In particular, for every
$\varepsilon\in(0,\alpha)$ and all sufficiently large $t$,
$$
t^{-(\alpha+\varepsilon)}
\le
\bar F(t)
\le
t^{-(\alpha-\varepsilon)}.
$$

We first prove the quantile estimate. We have $a_n\to\infty$. Indeed, if
$(a_n)$ were bounded, then $\bar F$ would vanish eventually, contradicting
regular variation with index $-\alpha$.

Fix $\varepsilon\in(0,\alpha)$ and let $\rho>0$ be fixed. For all sufficiently
large $n$, both $a_n$ and $a_n-\rho$ are large enough for the preceding
polynomial bounds to apply. By the definition of the generalized inverse,
$$
\bar F(a_n)\le \frac{1}{n}
\qquad\text{and}\qquad
\frac{1}{n}\le \bar F(a_n-\rho).
$$
Therefore, for all sufficiently large $n$,
$$
a_n^{-(\alpha+\varepsilon)}
\le
\bar F(a_n)
\le
\frac{1}{n}
\le
\bar F(a_n-\rho)
\le
(a_n-\rho)^{-(\alpha-\varepsilon)}.
$$
It follows that
$$
n^{1/(\alpha+\varepsilon)}
\le
a_n
\le
\rho+n^{1/(\alpha-\varepsilon)}.
$$
Hence
$$
\frac{1}{\alpha+\varepsilon}
\le
\liminf_{n\to\infty}\frac{\log a_n}{\log n}
\le
\limsup_{n\to\infty}\frac{\log a_n}{\log n}
\le
\frac{1}{\alpha-\varepsilon}.
$$
Letting $\varepsilon\downarrow0$ proves
$$
\frac{\log a_n}{\log n}\longrightarrow \frac{1}{\alpha}.
$$

Moreover,
$$
\frac{\log\bar F(a_n)}{\log n}
=
\frac{\log\bar F(a_n)}{\log a_n}
\frac{\log a_n}{\log n}.
$$
The first factor converges to $-\alpha$ and the second one to $1/\alpha$.
Thus
$$
\frac{\log\bar F(a_n)}{\log n}\longrightarrow -1.
$$
This proves (i).

For (ii), note that
$$
\frac{\log t_n(x)}{\log n}
=
\frac{\log a_n}{\log n}+\log x.
$$
The convergence is uniform for $x\in[1,M]$ by (i).

We next prove (iii). Since $t_n(x)\ge a_n\to\infty$ uniformly for
$x\in[1,M]$, the convergence
$$
\frac{\log \bar F(t)}{\log t}\longrightarrow -\alpha
$$
may be applied uniformly along $t=t_n(x)$. Hence, uniformly for $x\in[1,M]$,
\begin{align*}
\frac{\log\bar F(t_n(x))}{\log n}
&=
\frac{\log\bar F(t_n(x))}{\log t_n(x)}
\frac{\log t_n(x)}{\log n} \\
&\longrightarrow
-\alpha\left(\frac{1}{\alpha}+\log x\right)
=
-1-\alpha\log x.
\end{align*}
This is equivalent to
$$
\bar F(t_n(x))
=
n^{-1-\alpha\log x+o(1)}
$$
uniformly for $x\in[1,M]$.

Finally, let $(b_n)$ be a positive sequence with $\log b_n=o(\log n)$. Then,
uniformly for $x\in[1,M]$,
$$
\frac{\log(b_n t_n(x))}{\log n}
=
\frac{\log b_n}{\log n}
+
\frac{\log t_n(x)}{\log n}
=
\frac{1}{\alpha}+\log x+o(1).
$$
In particular, $b_n t_n(x)\to\infty$ uniformly for $x\in[1,M]$. Applying again
the convergence
$$
\frac{\log \bar F(t)}{\log t}\longrightarrow -\alpha
$$
along $t=b_n t_n(x)$ gives
$$
\frac{\log\bar F(b_n t_n(x))}{\log n}
\longrightarrow
-1-\alpha\log x
$$
uniformly for $x\in[1,M]$. This proves (iv).
\end{proof}

\begin{proof}[Proof of Lemma~\ref{lem:max}]
Let $x>1$ and put
$$
q_n(x)=\bar F(t_n(x)).
$$
By Lemma~\ref{lem:marginal}(iii),
$$
q_n(x)=n^{-1-\alpha\log x+o(1)}.
$$
Since $x>1$, we have
$$
nq_n(x)=n^{-\alpha\log x+o(1)}\longrightarrow0.
$$
Moreover,
$$
\mathbb P(Z_n>x)
=
\mathbb P(X_{(n)}>t_n(x))
=
1-\bigl(1-q_n(x)\bigr)^n.
$$
Using the identity
$$
1-u^n=(1-u)(1+u+\cdots+u^{n-1}),
\qquad 0\le u\le1,
$$
we obtain
$$
n(1-u)u^{n-1}\le 1-u^n\le n(1-u).
$$
Using Bernoulli's inequality $u^{n-1}\ge 1-(n-1)(1-u)$, we get
$$
n(1-u)[1-(n-1)(1-u)]\le 1-u^n\le n(1-u).
$$
Applying this with $u=1-q_n(x)$, we obtain
$$
n q_n(x)(1-(n-1)q_n(x))
\le
\mathbb P(Z_n>x)
\le
nq_n(x).
$$
Since $nq_n(x)\to0$, we have
$$
1-(n-1)q_n(x)\longrightarrow 1.
$$
Thus
$$
\mathbb P(Z_n>x)
=
nq_n(x)(1+o(1)).
$$
Using again
$$
q_n(x)=n^{-1-\alpha\log x+o(1)},
$$
we obtain
$$
\mathbb P(Z_n>x)
=
n^{-\alpha\log x+o(1)}.
$$
Hence
$$
\lim_{n\to\infty}
\frac{1}{\log n}\log\mathbb P(Z_n>x)
=
-\alpha\log x.
$$
The proof is complete.
\end{proof}



\end{document}